\documentclass[12pt]{article}
\usepackage{amsmath,amssymb,color,amscd}
\usepackage{xspace}

\def\1{\underline{1}}

\def\Z{{\mathbb Z}}

\def\C{{\mathbb C}}

\def\O{{\mathcal O}}

\newtheorem{theorem}{Theorem}

\newtheorem{proposition}{Proposition}

\newenvironment{definition}
{\smallskip\noindent{\bf Definition\/}:}{\smallskip\par}

\newenvironment{remark}
{\smallskip\noindent{\bf Remark\/}.}{\smallskip\par}
\newenvironment{remarks}
{\smallskip\noindent{\bf Remarks\/}.}{\smallskip\par}
\newenvironment{proof}
{\noindent{\bf Proof\/}.}{{ $\square$}\smallskip\par}
\newenvironment{Proof}
{\noindent{\bf Proof\/}}{{$\square$}\smallskip\par}

\title{On simple $\Z_2$-invariant and corner function germs
\footnote{2010 Math. Subject Class.: 14B05, 58E40. 
Keywords: group actions, invariant germs, simple singularities.}
}

\author{S.M.~Gusein-Zade 
\and A.-M.Ya.~Rauch \thanks{
The research was supported by the grant 16-11-10018 of the Russian Science Foundation.
Address: Moscow State University,
Faculty of Mechanics and Mathematics, GSP--1, Moscow, 119991, Russia.
E-mail: sabir\symbol{'100}mccme.ru, anakuta\symbol{'100}gmail.com}}
\date{}

\begin{document}

\maketitle

\begin{abstract}
V.I.~Arnold has classified simple (i.~e.\ having no modules for the classification) singularities
(function germs), and also simple boundary singularities (function germs invariant with respect to the action
$\sigma(x_1; y_1, \ldots, y_n)=(-x_1; y_1, \ldots, y_n)$ of the group $\Z_2$. In particular, it was shown that
a function germ (respectively a boundary singularity germ) is simple if and only if the intersection form
(respectively the restriction of the intersection form to the subspace to anti-invariant cycles) of a germ in $3+4s$
variables stable equivalent to the one under consideration is negative definite and if and only if the (equivariant)
monodromy group on the corresponding subspace is finite. We formulate and prove analogues of these statements
for function germs invariant with respect to an arbitrary action of the group $\Z_2$ and also for corner
singularities.
\end{abstract}

\section{Introduction}\label{sec:Introduction}
A celebrated result of V.I.~Arnold states that simple (that is having no moduli for the classification
(with respect to the right equivalence); see the precise definition in \cite[Part~II, Section~15]{AGV1})
germs of holomorphic functions up to the stable equivalence are given by the list $A_k$,
$D_k$, $E_6$, $E_7$, $E_8$: \cite{Arnold-1972}, see also \cite{AGV1}. Moreover, for a germ in $3\mod 4$ variables
the homology group (with integer coefficients) of the Milnor fibre together with the intersection form on it
is isomorphic to the
corresponding integer lattice with the scalar product (multiplied by $-1$ according to the traditions of the
Singularity Theory) and the monodromy group of this germ is isomorphic to the corresponding Weyl group.
Besides that a germ (in $3\mod 4$ variables) is simple if and only if the intersection form is negative definite
and if and only if its monodromy group is finite.

According to one of the interpretations, a {\em boundary singularity} is a germ of a (holomorphic) function on the
space $(\C^{n+1}, 0)$ invariant with respect to the action of the cyclic group $\Z_2$ of order two given by the
formula $\sigma(x_1; y_1, \ldots, y_n)=(-x_1; y_1, \ldots, y_n)$ ($\sigma$ is the non-zero element of the group
$\Z_2$). As this was shown in \cite{Arnold-1978}, simple boundary singularities up to the stable equivalence are
given by the list $A_k$, $D_k$, $E_6$, $E_7$, $E_8$, $B_k$, $C_k$, $F_4$, for a boundary singularity in $3\mod 4$
variables the subgroup of {\em anti-invariant} with respect to $\sigma$ elements of the homology group of the Milnor
fibre together with the intersection form on it is isomorphic to the corresponding integer lattice with the scalar
product multiplied by $-1$ and the monodromy group on it is isomorphic to the corresponding Weyl group.
Moreover, a germ in $3\mod 4$ variables is simple if and only if the intersection form on the subgroup of
anti-invariant cycles is negative definite and if and only if the corresponding monodromy group is finite.

Let the cyclic group $\Z_2$ of order 2 act on the germ of a complex affine space. Without loss of generality
one may assume that the action is linear and, moreover, it is defined by the representation $T_{m,n}$ of the group
$\Z_2$ on the space $(\C^{m+n}, 0)$ given by the formula
\begin{equation}\label{eq:action}
\sigma(x_1, \ldots, x_m; y_1, \ldots, y_n)=(-x_1, \ldots, -x_m; y_1, \ldots, y_n)\,,
\end{equation}
where $(x_1, \ldots, x_m)=\bar{x}\in\C^m$, $(y_1, \ldots, y_n)=\bar{y}\in\C^n$.
Germs invariant with respect to the described action of the group $\Z_2$ are considered up to an equivariant
coordinate changes: $f_1\sim f_2$ ($f_i:(\C^n, 0)\to(\C, 0)$, $f_i\circ\sigma=f_i$) if there exist a local
holomorphism (a holomorphic isomorphism) $h:(\C^n, 0)\to(\C^n, 0)$ such that $\sigma\circ h= h\circ\sigma$
and $f_2=f_1\circ h$. A stabilization of a $\Z_2$-invariant with respect to the representation $T_{m,n}$ germ
$f:(\C^{m+n}, 0)\to(\C, 0)$ (usually, i.~e.\ for $m\le 1$) is the invariant with respect to the representation
$T_{m,n+\ell}$ ($\ell\ge 0$) germ $\widehat{f}:(\C^{m+n+\ell}, 0)\to(\C, 0)$ given by the formula
$$
\widehat{f}(x_1, \ldots, x_m; y_1, \ldots, y_{n+\ell})=
f(x_1, \ldots, x_m; y_1, \ldots, y_n)+y_{n+1}^2+\ldots+y_{n+\ell}^2\,.
$$
In order to distinguish from another type of stabilization considered below, we shall call it {\em a stabilization
of the first type}, {\em I-stabilization} for short.
Two germs are (equivariantly) I-stable equivalent if some their I-stabilizations
are (equivariantly) equivalent. I-stabilization of a germ is equivariantly simple if and only if the germ
itself is equivariantly simple. This is a consequence of the equivariant version of the Morse Lemma with parameters
(which follows, for example, from the description of an invariant (mini)versal deformation of an invariant
germ \cite{Slodowy}). The results of Arnold from \cite{Arnold-1972} and \cite{Arnold-1978} describe, in particular,
all equivariantly simple germs $\Z_2$-invariant with respect the representation $T_{m,n}$ for $m=0$ and $1$
and state that in these cases a $\Z_2$-invariant germ is equivariantly simple if and only if the intersection form
on the subspace of the homology classes $a$ of the Milnor fibre of an I-stabilization of the germ depending on
$3\mod 4$ variables satisfying the condition $\sigma_*(a)=(-1)^ma$ is negative definite and if and only if the
corresponding monodromy group is finite. For short we shall call {\em $(-1)^{\bullet}$-invariant} the cycles from
the homology group of the Milnor fibre satisfying the condition $\sigma_*(a)=(-1)^ma$ .
Here we give the classification of the equivariantly simple germs invariant with respect to $\Z_2$-actions
(\ref{eq:action}) with other $m$ (up to the stable equivalence they are also given by the list
$A_k$, $D_k$, $E_6$, $E_7$, $E_8$, $B_k$, $C_k$, $F_4$) and show that the formulated criterion for simplicity
holds for these cases as well.

In the paper~\cite{Siersma} D.~Sirsma considered the problem of classification of the so-called corner singularities
wich can be interpreted as the problem of classification of function germs on the space $(\C^{m+n}, 0)$
invariant with respect to the action of the group $\Z_2^m$ given by the equation
\begin{equation}
\sigma_i(x_1, \ldots, x_m; y_1, \ldots, y_n)=(x_1, \ldots, x_{i-1}, -x_i, x_{i+1} x_m; y_1, \ldots, y_n)\,,
\end{equation}
where $\sigma_1$, \dots, $\sigma_m$ are the generators of the group $\Z_2^m$.
The paper~\cite{Siersma} contains a classification of the germs up to the codimension (more precisely, the
``bundle codimension'') four, i.~e.\ such that either they, or germs from the same $\{\mu={\rm const}\}$--families
can be in an irremovable way met in four-parameter families of functions. The paper does not contain a description
of simple germs apparently because it appears to be in some sense degenerate: up to the stable equivalence
and renumbering the generators of the group $\Z_2^m$ it coincides with the list of simple boundary singularities:
see Theorem~\ref{theo:class_corner}.
However, from the point of view of an analogue of the criterion of simplicity in terms of the negative definiteness
of the intersection form and of the finiteness of the monodromy group this classification leads to a non-trivial
(apriori not obvious) result. This is connected with the fact that the subspace of cycles on which the intersection
form is considered is, in general, strictly smaller than for the corresponding boundary singularities. The fact
that it is negative definite for simple singularities is of course a direct consequence of this property for
boundary singularities. However the fact that there are no other singularities possessing this property
(i.~e. that the confining singularities do not possess this property) does not follow from the corresponding
statement for boundary singularities and needs its own verification (at least for some of them).

\section{Equivariantly simple $\Z_2$-invariant germs}\label{sec:Z_2-simple}
\begin{theorem}\label{theo:class}
 Equivariantly simple function germs invariant with respect to the action $T_{m,n}$ of the group $\Z_2$ on
 the space $\C^{m+n}$ up to the equivariant I-stable equivalence are given by the following list:
 \begin{enumerate}
  \item[1)] $A_k$: $x_1^2+\ldots+x_m^2+y_1^{k+1}$, $k\ge 1$;
  \item[2)] $D_k$: $x_1^2+\ldots+x_m^2+y_1^2y_2+y_2^{k-1}$, $k\ge 4$;
  \item[3)] $E_6$: $x_1^2+\ldots+x_m^2+y_1^3+y_2^4$;
  \item[4)] $E_7$: $x_1^2+\ldots+x_m^2+y_1^3+y_1y_2^3$;
  \item[5)] $E_8$: $x_1^2+\ldots+x_m^2+y_1^3+y_2^5$;
  \item[6)] $B_k$: $x_1^{2k}+x_2^2+\ldots+x_m^2$, $k\ge 2$;
  \item[7)] $C_k$: $x_1^2y_1+x_2^2+\ldots+x_m^2+y_1^k$, $k\ge 2$;
  \item[8)] $F_4$: $x_1^4+x_2^2+\ldots+x_m^2+y_1^3$.
 \end{enumerate}
\end{theorem}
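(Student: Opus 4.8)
The plan is to reduce the classification to Arnold's two theorems --- for ordinary germs ($m=0$) and for boundary germs ($m=1$) --- and to isolate the single genuinely new phenomenon, namely that excessive degeneracy in the $x$-variables destroys simplicity. The structural fact I would start from is that an invariant quadratic form contains no mixed monomial $x_iy_j$, because such a monomial is anti-invariant; hence the $2$-jet of an invariant germ has the shape $\sum a_{ij}x_ix_j+\sum b_{ij}y_iy_j$. Since every element of $GL_m(\C)$ commutes with $\sigma$ on the $x$-space, I would diagonalise $\sum a_{ij}x_ix_j$ by an equivariant linear change and then invoke the equivariant Morse lemma with parameters (quoted in the introduction) to split off every nondegenerate $x$-square and $y$-square. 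Up to equivariant I-stable equivalence this puts $f$ into the form $x_{s+1}^2+\dots+x_m^2+\widetilde f(x_1,\dots,x_s;\bar y)$, where $s$ is the $x$-corank and the reduced germ $\widetilde f$ is invariant with vanishing Hessian in $x_1,\dots,x_s$ and in the surviving $y$-variables. As the splitting is equivariant and compatible with parameters, equivariant deformations of $f$ correspond bijectively to those of $\widetilde f$, so $f$ is equivariantly simple if and only if $\widetilde f$ is; this reduces everything to an analysis according to the value of $s$.

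If $s=0$, the reduced germ $\widetilde f=g(\bar y)$ depends only on the $\sigma$-fixed coordinates, the residual action is trivial, and equivariant deformations are just ordinary deformations of $g$; Arnold's classification then gives $g\in\{A_k,D_k,E_6,E_7,E_8\}$ and hence items 1)--5). If $s=1$, the reduced germ $h(x_1;\bar y)$ is even in $x_1$ with vanishing $x_1$-Hessian, i.e.\ exactly a boundary singularity of $x_1$-corank one, and the residual action $x_1\mapsto-x_1$ is the boundary action, so equivariant simplicity of $f$ is equivalent to simplicity of $h$ as a boundary germ. Arnold's boundary list is $A_k,D_k,E_6,E_7,E_8,B_k,C_k,F_4$; the invariant condition that the $x_1$-Hessian vanish rules out precisely the types carrying a transversal Morse square in $x_1$ (namely $A_k,D_k,E_\bullet$) and leaves $B_k,C_k,F_4$, giving items 6)--8). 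The same correspondence of deformation problems runs in reverse and shows that every germ in the list is equivariantly simple, since each reduces to an ordinary or boundary germ already known to be simple; it remains only to record that the stated ranges of $k$ and the normal forms are pairwise inequivalent, which is inherited from the $m\le1$ cases.

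The remaining case $s\ge2$, which I expect to be the main obstacle, is where the argument must be carried out by hand. Here $\widetilde f$ involves at least two degenerate $x$-variables $x_1,x_2$, so every $x_1,x_2$-monomial has degree $\ge4$. Since all degree-four monomials in the $x$-variables are invariant, arbitrary binary quartics in $x_1,x_2$ are realised by invariant deformations, and a binary quartic carries a continuous modulus (its cross-ratio, or $j$-invariant) under $GL_2(\C)$; whenever the quartic part of $\widetilde f(x_1,x_2,0,\dots,0)$ is nonzero this already produces a one-parameter family of pairwise inequivalent germs arbitrarily near $\widetilde f$, so $\widetilde f$ is not simple. The hard part is the uniform treatment of the degenerate sub-cases: a quartic with a repeated root or an absent quartic part (where one passes to the binary sextic, with two moduli), the situation where some $x_j$ fails to appear at all (forcing a non-isolated critical point, hence infinite Milnor number and non-simplicity), and germs in which $x_1,x_2$ enter only through products with the $y$-variables, such as $x_1^2y_1+x_2^2y_2$, where the modulus must be extracted from the relative position of the degenerate members of the pencil of $x$-quadrics $\sum a_{ij}(\bar y)x_ix_j$. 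In each of these I would either exhibit an explicit invariant deformation carrying a modulus or establish adjacency to a germ already shown to be modular (non-simplicity being inherited from any germ arbitrarily close); welding these sub-cases into one clean argument is the crux of the proof.
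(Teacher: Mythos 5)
Your reduction step (no mixed monomials $x_iy_j$ in an invariant $2$-jet, equivariant diagonalisation, equivariant Morse lemma with parameters, and the equivalence ``$f$ simple $\iff$ the reduced corank germ is simple'') is exactly the paper's first move, and your cases $s=0$ and $s=1$ (Arnold's ordinary and boundary classifications) also match the paper, which cites \cite[\S8]{Arnold-1978} for $m_1\le 1$. The problem is the case $s\ge 2$: you correctly identify the binary quartic with its cross-ratio as the relevant modulus, but you then stop, listing the degenerate sub-cases (quartic part with a repeated root, vanishing quartic part, a variable $x_j$ not occurring, the $x$'s entering only via products with the $y$'s) and declaring that welding them together ``is the crux of the proof.'' That is a genuine gap --- you have not proved non-simplicity for $s\ge2$, which is the only part of the theorem not already in Arnold's papers.

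The idea you are missing is that no case analysis on the quartic part of $\widetilde f$ is needed, because simplicity is inherited under small deformation: if arbitrarily small invariant perturbations of $\widetilde f$ produce a non-simple germ, then $\widetilde f$ is not simple. Since $\widetilde f$ has zero $2$-jet and all quartic monomials in $x_1,x_2$ are invariant, one may perturb $\widetilde f$ by $t\bigl(x_3^2+\dots+x_s^2+y_1^2+\dots+y_{n_1}^2+q(x_1,x_2)\bigr)$ with $q$ a generic binary quartic; after equivariant splitting the result is stably equivalent to a germ $P_4(x_1,x_2)$ with $P_4$ a \emph{non-degenerate} homogeneous quartic, i.e.\ to $x_1^4+x_2^4+ax_1^2x_2^2$, $a^2\ne 4$ (the germ the paper calls $M_5$). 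This single class is non-simple, since $a$ is a modulus determining the cross-ratio of the four lines $P_4=0$, so every germ with $x$-corank $\ge 2$ is adjacent to a non-simple germ and hence non-simple. This is precisely how the paper disposes of the case $m_1\ge 2$ in one sentence; all of your ``hard sub-cases'' dissolve under this perturbation argument, because one never needs the quartic part of $\widetilde f$ itself to be non-degenerate, nonzero, or even to involve all the variables --- one only needs to be able to \emph{deform} into the non-degenerate situation, and invariance of the quartic monomials guarantees that.
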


\begin{definition}
A {\em stabilization of the second type} ({\em II-stabilization} for short) of a $\Z_2$-invariant with respect
to the representation $T_{m,n}$ germ
$f:(\C^{m+n}, 0)\to(\C, 0)$ is the invariant with respect to the representation $T_{m+k,n}$ ($k\ge 0$)
germ $\widehat{f}:(\C^{m+k+n}, 0)\to(\C, 0)$ defined by the formula
$$
\widehat{f}(x_1, \ldots, x_{m+k}; y_1, \ldots, y_{n})=
f(x_1, \ldots, x_m; y_1, \ldots, y_n)+x_{m+1}^2+\ldots+x_{m+k}^2\,.
$$
By a {\em stabilization} of a germ we shall call a II-stabilization of its I-stabilization.
We shall say that two germs are {\em (equivariantly) II-stable equivalent}, if some their
II-stabilizations are (equivariantly) equivalent. We shall say that two germs are {\em (equivariantly) stable
equivalent} if some their stabilizations are (equivariantly) equivalent.
\end{definition}

Theorem~\ref{theo:class} can be formulated in the following form: a $\Z_2$-invariant germ is simple if and only if
it is II-stable equivalent to a simple boundary germ (i.~e.\ to a germ with $m=1$).

\begin{Proof} of Theorem~\ref{theo:class} to a big extent is already contained in~\cite{Arnold-1978}.
For a $\Z_2$-invariant (with respect to the representation $T_{m,n}$) germ $f:(\C^{m+n}, 0)\to(\C, 0)$
with a critical point at the origin, let us denote by $m_1$ and $n_1$ the coranks of the restrictions of the germ
$f$ to the subspaces $\C^m$ and $\C^n$ respectively, i.~e.\ the coranks of their second differentials.
For stably equivariantly equivalent germs these coranks coincide. The germ $f$ is equivariantly equivalent to a
stabilization of a germ $\check f:(\C^{m_1+n_1}, 0)\to(\C, 0)$ $\Z_2$-invariant with respect to the representation
$T_{m_1,n_1}$. Moreover, the germ $f$ is simple if and only if the germ $\check f$ is simple. Simple germs with
$m_1\le 1$ were classified in \cite[\S8]{Arnold-1978}. If $m_1\ge 1$, a germ stably equivalent to $\check f$ is
adjacent to a germ stably equivalent to an (invariant with respect to the representation $T_{2,0}$) germ of the form
$P_4(x_1,x_2)$, where $P_4$ is a non-degenerate homogeneous polynomial of degree 4. Such a germ obviously is not
simple. (It is equivalent to a germ $x_1^4+x_2^4+ a x_1^2x_2^2$ with $a^2\ne 4$, where $a$ is a modulus which
determines the cross ration of the lines constituting the zero level set.) Together with \cite[\S8]{Arnold-1978}
this proves the statement. A germ stably equivalent to $P_4(x_1,x_2)$ we shall call a germ of type $M_5$. The name
is chosen more or less randomly taking in mind the Arnold's names $K_{4,2}$ and $L_6$ for confining boundary
singularities: see Proposition~\ref{prop:bound} below. The index 5 indicates ``the Milnor number'', i.~e.\ the
dimension of the space of $\Z_2$-invariant cycles in the Milnor fibre of the germ $P_4(x_1,x_2)$.
\end{Proof}

The proof of Theorem~\ref{theo:class} together with \cite[page~268]{AGV1} and \cite[Corollary~1]{Arnold-1978}
implies the following statement.

\begin{proposition}\label{prop:bound}
 Any non-simple germ is adjacent to a germ stably equivalent to one from the following
 {\em confining families}:
 \begin{enumerate}
  \item[1)] $P_8$: $y_1^3+y_2^3+y_3^3+ay_1y_2y_3$, $a^3+27\ne 0$;
  \item[2)] $X_9$: $y_1^4+y_2^4+ay_1^2y_2^2$, $a^2\ne 4$;
  \item[3)] $J_{10}$: $y_1^3+y_2^6+ay_1^2y_2^2$, $4a^3+27\ne 0$;
  \item[4)] $F_{1,0}$: $x_1^6+y_1^3+ax_1^2y_1^2$, $4a^3+27\ne 0$;
  \item[5)] $K_{4,2}$: $x_1^4+y_1^4+ax_1^2y_1^2$, $a^2\ne 4$;
  \item[6)] $L_6$: $x_1^2y_1+ax_1^2y_2+ y_1^3+y_2^3$, $a^3\ne 1$;
  \item[7)] $M_5$: $x_1^4+x_2^4+ax_1^2x_2^2$, $a^2\ne 4$.
 \end{enumerate}
\end{proposition}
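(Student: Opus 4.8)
The plan is to prove Proposition~\ref{prop:bound} by combining the adjacency structure already established in the proof of Theorem~\ref{theo:class} with the classical adjacency results for ordinary and boundary singularities. The statement asserts that every non-simple germ (invariant with respect to some $T_{m,n}$) is adjacent to a germ stably equivalent to one of the seven listed confining families $P_8$, $X_9$, $J_{10}$, $F_{1,0}$, $K_{4,2}$, $L_6$, $M_5$. The overall strategy is a case analysis organized by the coranks $m_1$ and $n_1$ introduced in the proof of Theorem~\ref{theo:class}, reducing each non-simple germ to the reduced germ $\check f$ on $(\C^{m_1+n_1},0)$ and then identifying which confining family it is adjacent to.

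First I would recall that a germ is simple if and only if its reduced germ $\check f$ is simple, so it suffices to treat reduced germs, i.e.\ those whose second differential vanishes identically (no nondegenerate quadratic part in either the $\bar x$ or the $\bar y$ block). I would then split into the cases $m_1=0$ and $m_1\ge 1$. When $m_1=0$ the action is trivial, and one is dealing with ordinary function germs in the $\bar y$ variables; here non-simplicity forces adjacency to one of the first parabolic singularities in Arnold's hierarchy, namely $P_8$, $X_9$, or $J_{10}$, which is exactly the content of \cite[page~268]{AGV1}. When $m_1=1$ the reduced germ is a boundary singularity, and \cite[Corollary~1]{Arnold-1978} supplies the adjacency to one of the confining boundary families $F_{1,0}$, $K_{4,2}$, $L_6$. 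These two inputs, cited in the sentence preceding the proposition, handle all cases with $m_1\le 1$ directly.

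The remaining case is $m_1\ge 2$, and this is the genuinely new contribution. Here I would invoke the argument already given inside the proof of Theorem~\ref{theo:class}: a reduced germ with $m_1\ge 2$ is adjacent to a germ stably equivalent to a germ of the form $P_4(\bar x)$ on $(\C^2,0)$, invariant under $T_{2,0}$, where $P_4$ is a nondegenerate homogeneous quartic. Such a germ is precisely the $M_5$ family, equivalent to $x_1^4+x_2^4+ax_1^2x_2^2$ with $a^2\ne 4$, the modulus $a$ recording the cross-ratio of the four lines in the zero set. Thus every non-simple reduced germ with $m_1\ge 2$ is adjacent to the $M_5$ family, completing the list. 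I would also remark on consistency: since the first parabolic families $P_8$, $X_9$, $J_{10}$ for $m_1=0$ and the boundary families for $m_1=1$ are themselves the minimal non-simple germs in their respective strata, the seven families together exhaust all ways a non-simple invariant germ can first appear.

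The main obstacle I anticipate is establishing the $m_1\ge 2$ adjacency rigorously—specifically, showing that \emph{every} reduced germ with $m_1\ge 2$ degenerates to a nondegenerate homogeneous quartic $P_4(x_1,x_2)$ in a suitable two-dimensional $\bar x$-subspace, and that this degeneration can be realized equivariantly and compatibly with stabilization. One must verify that the $3$-jet of the restriction to $\C^{m_1}$ cannot be made nondegenerate by an equivariant change of coordinates (otherwise the corank would drop), so that the leading invariant behavior is governed by a quartic form; then a generic perturbation within the family of invariant quartics produces the nondegenerate $P_4$, and any degeneracy of the actual germ only makes it \emph{more} special, hence still adjacent to the generic $M_5$. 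Making the equivariance and the stable-equivalence bookkeeping precise here is the delicate point, though the underlying geometric picture—four lines through the origin with a cross-ratio modulus—is transparent.
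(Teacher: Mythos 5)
Your proposal is correct and follows essentially the same route as the paper: the paper's one-sentence proof consists precisely of combining the corank reduction from the proof of Theorem~\ref{theo:class} (which gives adjacency to $M_5$ when $m_1\ge 2$) with the classical confining families of \cite[page~268]{AGV1} for the case $m_1=0$ and of \cite[Corollary~1]{Arnold-1978} for the boundary case $m_1=1$. The only point worth polishing is your $m_1\ge 2$ discussion: since the reduced germ is invariant, its restriction to $\C^{m_1}$ contains only even-degree terms, so the vanishing of the $3$-jet there is automatic and the quartic leading part (hence the degeneration to a nondegenerate $P_4(x_1,x_2)$ under generic invariant perturbation) comes for free rather than being a delicate verification.
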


\begin{remark}
 We use the notations of Arnold. ``More modern'' notations for the first three singularities are
 $\widetilde{E}_6$, $\widetilde{E}_7$ and $\widetilde{E}_8$. 
\end{remark}

The following statement uses the notion of the equivariant monodromy group which, according to our knowledge,
has no fixed definition in the literature. For boundary singularities a precise definition was given
in~\cite{Arnold-1978}. Here we shall give a general definition for an arbitrary germ invariant with respect
to an action of a finite group which will be used below.

Let $f:(\C^k,0)\to(\C,0)$ be a germ of a holomorphic function invariant with respect to an action of a finite
group $G$ on $(\C^k,0)$ and having an isolated critical point at the origin. Let $\widetilde{f}:U\to\C$ be
a generic $G$-invariant perturbation of the germ $f$ общего вида (defined in a small neighbourhood $U$ of the origin.
The genericity condition consists of the requirement that, for $G$ invariant functions close to $\widetilde{f}$,
the sum of the Milnor numbers of the critical point lying on one level set is the same as for the function
$\widetilde{f}$. In other words, all critical points of the function $\widetilde{f}$ are non-splittable and each
level set of the function $\widetilde{f}$ contains not more than one $G$-orbit of critical points.
For germs invariant with respect to an action of the group $\Z_2$ and for corner singularities, non-splittableness
is equivalent to non-degeneracy (i.e.\ being Morse). The monodromy transformations corresponding to going around
the critical values of the function $\widetilde{f}$ can be assumed to be $G$-equivariant (as self-maps of the
Milnor fibre). Therefore the corresponding monodromy operators (the Picard–Lefschetz operators:
the actions of the monodromy transformations on
the $(k-1)$st homology group of the Milnor fibre) commute with the representation of the group $G$ on the
homology group (and therefore preserve the subspaces corresponding to irreducible representations of the group $G$).
The group of operators on the homology group of the Milnor fibre generated by the monodromy operators corresponding
to going around of all the critical values is called {\em the equivariant monodromy group} of the germ $f$.
It is not difficult to show that the notion is well-defined, i.e.\ does not depend on the choice of the perturbation
$\widetilde{f}$ (since the space of generic perturbations is connected). We shall consider the action of the
equivariant monodromy group on the subspace of the homology group corresponding to one particular irreducible
representation of the group $\Z_2$ or of $\Z_2^m$~--- the subspace of the $(-1)^{\bullet}$-invariant cycles.

\begin{theorem}\label{theo:topology}
A $\Z_2$-invariant (with respect to the action $T_{m,n}$) germ is simple if and only if the intersection form on
the subspace of $(-1)^{\bullet}$-invariant cycles in the homology group of the Milnor fibre of its I-stabilization
with the number of variables congruent to three modulo 4 is negative definite and if and only if
its equivariant monodromy group on this subspace is finite.
\end{theorem}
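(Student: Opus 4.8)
The plan is to split the triple equivalence into two implications with complementary hypotheses: (P) if the germ is simple, then the intersection form on the $(-1)^{\bullet}$-invariant subspace (of the I-stabilization in $\equiv 3\bmod 4$ variables) is negative definite and the equivariant monodromy group there is finite; and (Q) if the germ is not simple, then that form is not negative definite and the corresponding group is infinite. Since (P) and (Q) together cover all germs and each asserts both properties, they yield at once [simple $\Leftrightarrow$ negative definite] and [simple $\Leftrightarrow$ finite], which is the full statement. Throughout I would keep the number of variables $\equiv 3\bmod 4$, so that the relevant form is symmetric and normalized to be negative (rather than positive) definite in the model cases.

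For (P) I would use the reformulation of Theorem~\ref{theo:class}: a simple $\Z_2$-invariant germ is II-stable equivalent to a simple boundary germ ($m=1$), for which the assertion is exactly Arnold's theorem~\cite{Arnold-1978}. It then remains to check that both properties are preserved under the two stabilizations, and the key point is the effect of a suspension on the $\sigma$-action. The Milnor fibre of $x^2$ is the pair of points $\pm\sqrt{\eps}$, and its reduced zeroth homology is one-dimensional, generated by their difference; if $\sigma(x)=-x$ (a II-stabilization) then $\sigma$ interchanges the two points and acts by $-1$, whereas if $\sigma(x)=x$ (an I-stabilization) it acts by $+1$. Hence, by the Sebastiani--Thom isomorphism, each II-stabilization multiplies $\sigma_*$ on the vanishing homology by $-1$ (interchanging invariant and anti-invariant cycles) while each I-stabilization leaves it unchanged; both multiply the intersection form by $-1$. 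Tracking these signs shows that the anti-invariant subspace of a boundary germ and the $(-1)^{\bullet}$-invariant subspace of a germ II-stable equivalent to it map onto the same $\sigma_*$-eigenspace in any common stabilization, and that in $3\bmod 4$ variables the induced form has a fixed sign. Negative definiteness and the identification of the group with the corresponding Weyl group thus transport verbatim from the boundary case.

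For (Q) I would invoke Proposition~\ref{prop:bound}: every non-simple germ is adjacent to a germ stably equivalent to one of the confining families. Adjacency produces an isometric, $\sigma$-equivariant embedding of the Milnor lattice of the confining germ into that of the given germ, compatible (via the stable-equivalence bookkeeping above) with the distinguished eigenspaces; consequently failure of negative definiteness, and infiniteness of the equivariant monodromy group, propagate from the confining germ to any germ adjacent to it. It thus suffices to verify the confining families one by one. The families $P_8,X_9,J_{10}$ have $m=0$, where the distinguished subspace is the whole vanishing homology; these are the parabolic singularities $\widetilde E_6,\widetilde E_7,\widetilde E_8$, whose intersection form in $3\bmod 4$ variables is negative semidefinite with two-dimensional radical (equivalently, the monodromy has eigenvalue $1$ with multiplicity two) and whose monodromy group is the infinite affine Weyl group. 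The families $F_{1,0},K_{4,2},L_6$ have $m=1$ and are exactly the confining boundary singularities, for which both conclusions are already contained in~\cite{Arnold-1978}.

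The main obstacle is the single genuinely new family $M_5$, the germ $P_4(x_1,x_2)=x_1^4+x_2^4+ax_1^2x_2^2$ with $\sigma=-\mathrm{id}$ on $\C^2$ (so $m=2$, and the distinguished subspace is the five-dimensional space of $\sigma$-invariant cycles inside the nine-dimensional vanishing homology). Because this subspace is strictly smaller than the whole homology, negative definiteness does \emph{not} follow from the $X_9$ computation and must be checked directly: a priori the two-dimensional radical could lie in the anti-invariant part, in which case the invariant form would be negative definite and the theorem would fail. I would settle this by an explicit eigenspace computation after an I-stabilization to three ($\equiv 3\bmod 4$) variables, say for the quasihomogeneous representative $x_1^4+x_2^4+y^2$. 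Its vanishing cohomology has the monomial basis indexed by $(l_1,l_2,l_0)$ with $1\le l_1,l_2\le 3$ and $l_0=1$; the $\sigma$-eigenvalue of the corresponding class is $(-1)^{l_1+l_2}$, and the radical (the monodromy eigenvalue-$1$ space) is spanned by the classes with $(l_1,l_2)=(1,1)$ and $(3,3)$. Both of these have $l_1+l_2$ even, hence are $\sigma$-invariant, so the entire two-dimensional radical lies inside the distinguished subspace. Therefore the restricted form is degenerate, in particular not negative definite, and the invariant part of the (affine-type) monodromy retains the infinite translation subgroup along the radical. This explicit verification for $M_5$ is the crux that is not reducible to the boundary results of~\cite{Arnold-1978}.
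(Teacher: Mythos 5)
Your overall architecture coincides with the paper's: reduce by stable equivalence, handle the simple germs via Theorem~\ref{theo:class} and Arnold's boundary results \cite{Arnold-1978}, then check that the confining families of Proposition~\ref{prop:bound} fail both properties, with $M_5$ as the only genuinely new case. For the intersection-form half of the $M_5$ case your argument is correct and takes a different route from the paper: the paper works with the Dynkin diagram of $x_1^4+x_2^4+y_1^2$ (Figure~\ref{fig:1}) and exhibits explicit kernel vectors $\nabla$, $\nabla'$ inside the invariant subspace, whereas you identify the radical with the fixed space of the classical monodromy and use the Brieskorn--Pham eigenvalue count to see that both radical generators, $(l_1,l_2,l_0)=(1,1,1)$ and $(3,3,1)$, have $\sigma_*$-eigenvalue $(-1)^{l_1+l_2}=+1$ and hence lie in the invariant subspace. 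That computation checks out (the eigenvalue-$1$ classes are exactly those with $l_1/4+l_2/4+1/2\in\Z$) and is a clean, arguably more conceptual proof of degeneracy of the restricted form.

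The gap is in the monodromy half for $M_5$. You assert that ``the invariant part of the (affine-type) monodromy retains the infinite translation subgroup along the radical,'' but this is precisely what must be proved, and it follows neither from degeneracy of the restricted form nor from infiniteness of the full monodromy group of $X_9$. The equivariant monodromy group is \emph{not} the restriction of the full $X_9$ monodromy group: it is generated only by the operators attached to critical values of a generic invariant perturbation, each of which is a product of Picard--Lefschetz reflections over a whole $\Z_2$-orbit of critical points (such as $H_4H_2$), so it is a priori a proper subgroup, and a subgroup of an infinite group may well be finite. Before any infiniteness argument can run one must determine how these orbit-products act on the invariant subspace; the paper does this via the Picard--Lefschetz formula, showing that on that subspace they are the reflections $h_i$ in the cycles $\delta_1,\ldots,\delta_5$, and then exhibits an infinite-order element through $(h_5h_4h_1)^s(\delta_2+\delta_3)=\delta_2+\delta_3+s\nabla$. (Once the generators are known to be reflections in non-isotropic vectors spanning the five-dimensional invariant subspace, infiniteness can also be deduced from a general lemma on reflection groups of degenerate semidefinite lattices; but either way the identification of the generators is an unavoidable step that your proposal omits, and your complex eigenbasis, which diagonalizes the monodromy, gives no access to this reflection structure.) A smaller imprecision: a single suspension does not ``multiply the intersection form by $-1$'' --- it exchanges symmetric and skew-symmetric forms; only a double suspension multiplies the form by $-1$. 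This is harmless for your bookkeeping modulo $4$, but the sentence as stated is wrong.
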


\begin{proof}
According to the description of the intersection form of a stabilization of a germ (\cite{Gabrielov}, see also
\cite[Part~I, Section~2, Theorem~14]{AGV2}) and taking into account the fact that the subspace of
$(-1)^{\bullet}$-invariant cycles for a stabilization of a germ (including the one of the second type) is isomorphic
to the subspace of $(-1)^{\bullet}$-invariant cycles of the germ itself (the latter follows, e.~g., from
\cite[Part~I, Section~2, Theorem~9]{AGV2}), it is sufficient to verify the statement for one representative of 
the stable equivalence class of a germ. The fact that all germs equivariantly simple with respect to a $\Z_2$-action
possess the indicated property follows from \cite{AGV1} and \cite{AGV2}. 
(In fact this directly follows from the fact that all these germs considered as ``usual'' (non-invariant)
are simple.) In order to prove that there are no other
germs possessing this property, it is sufficient to verify that the confining singularities from
Proposition~\ref{prop:bound} do not possess the property. For the first six this was verified by Arnold (and
formulated in \cite{AGV1} and \cite{AGV2}). Therefore it is sufficient to verify the statement for the (invariant
with respect to the action $T_{2,1}$) germ $x_1^4+x_2^4+y_1^2$. According to the description of Dynkin diagrams
of germs of functions in two variables (or rather of germs stable equivalent to germs of functions in two variables;
\cite{GZ}, see also \cite[Part~I, Section~4]{AGV2}) this germ has the Dynkin diagram given on Figure~\ref{fig:1}.
(The numbering of the vertices corresponds to the numbering of the basis cycles $\Delta_i$ in a distinguished basis.)
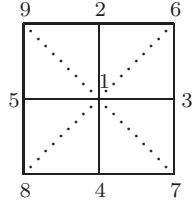
\begin{figure}[h]
$$
\unitlength=0.50mm
\begin{picture}(60.00,50.00)(0,10)
\thinlines
\put(10,50){\line(1,0){40}}
\put(10,30){\line(1,0){40}}
\put(10,10){\line(1,0){40}}
\put(10,10){\line(0,1){40}}
\put(30,10){\line(0,1){40}}
\put(50,10){\line(0,1){40}}
\multiput(12,12)(2,2){19}%
{\circle*{1}}
\multiput(48,12)(-2,2){19}%
{\circle*{1}}
\put(30,33){{\scriptsize$1$}}
\put(9,52){{\scriptsize$9$}}
\put(29,52){{\scriptsize$2$}}
\put(49,52){{\scriptsize$6$}}
\put(9,4){{\scriptsize$8$}}
\put(29,4){{\scriptsize$4$}}
\put(49,4){{\scriptsize$7$}}
\put(6,28){{\scriptsize$5$}}
\put(52,28){{\scriptsize$3$}}
\end{picture}
$$
\caption{Dynkin diagram of the function $x_1^4+x_2^4+y_1^2$.}
\label{fig:1}
\end{figure}
The group $\Z_2$ acts on the basis cycles by the reflection with respect to the center of the diagram
(i.~e.\ preserving the cycle $\Delta_1$ and mapping the cycles $\Delta_i$ and $\Delta_{i+2}$
into each other for $i=2,3,6,7$). Therefore the subspace of $(-1)^{\bullet}$-invariant (in this case~---
invariant) cycles is generated by the cycles $\delta_1=\Delta_1$, $\delta_2=\Delta_2+\Delta_4$,
$\delta_3=\Delta_3+\Delta_5$, $\delta_4=\Delta_6+\Delta_8$ and $\delta_5=\Delta_7+\Delta_9$.
(The dimension of the subspace of these cycles, as it was indicated above,
is equal to five.) It is not difficult to see that the restriction of the intersection form to this subspace
is not negative definite. Namely, the cycles $\nabla=2\Delta_1 +(\Delta_2+\Delta_4)+(\Delta_3+\Delta_5)$ and
$\nabla'=(\Delta_2+\Delta_4)+(\Delta_3+\Delta_5)+(\Delta_6+\Delta_8)+(\Delta_7+\Delta_9)$
$\Delta_1 +\frac{1}{2}(\Delta_2+\Delta_4)+\frac{1}{2}(\Delta_3+\Delta_5)$
lye in the kernel of this form. The equivariant monodromy group on the subspace of $(-1)^{\bullet}$-invariant 
(invariant in this case) cycles is generated by the Picard--Lefschetz operators $h_i$ associated to going around
the corresponding critical values. The operators $h_i$ are the reflections in the corresponding basis cycles
$\delta_i$ (with respect to the intersection form). The latter follows from the following reasoning (given for
simplicity for $h_2$). The definition of the operators $h_i$ implies that $h_2(a)=H_4H_2(a)$, where $H_i$ is
the reflection (of the whole homology group) in the basis cycles $\Delta_i$.
According to the classical Picard--Lefschetz formula we have
$$
h_2(a)=H_4H_2(a)=a+(a,\Delta_2)\Delta_2+(a,\Delta_4)\Delta_4+(a,\Delta_2)(\Delta_2,\Delta_4)\Delta_4\,.
$$
Moreover $(\Delta_2,\Delta_4)=0$ and, for an invariant cycle $a$ we have $(a,\Delta_2)=(a,\Delta_4)$.
This implies that
$$
h_2(a)=\frac{(a,\delta_2)}{2}\delta_2
$$
and therefore on the subspace of the invariant cycles the operator $h_2$ is the reflection in the cycle $\delta_i2$:
it is the identity on the orthogonal complement to the cycle $\delta_2$ and it maps $\delta_2$ to $-\delta_2$
(note that $(\delta_2,\delta_2)=-4$). The fact that this group is not finite
can be deduced from the general classification of finite groups generated by reflections and also follows from
the following computation. It is not difficult to see that $h_5h_4h_1(\delta_2+\delta_3)=\delta_2+\delta_3+\nabla$.
Since $\nabla$ lies in the kernel of the intersection form, $h_i(\nabla)=\nabla$. Therefore
$(h_5h_4h_1)^s(\delta_2+\delta_3)=\delta_2+\delta_3+s\nabla$ what implies that the group generated by the reflections
$h_i$ is infinite.
\end{proof}

\begin{remarks}
 {\bf 1.} According to~\cite{Wall} the subspace of $(-1)^{\bullet}$-invariant cycles in the homology group with
 complex coefficients of the Milnor fibre of a germ $f$ is isomorphic to the subspace of invariant elements in the
 local algebra $\O_{\C^{m+n},0}/\langle \frac{\partial\,f}{\partial x_i}, \frac{\partial\,f}{\partial y_j}\rangle$
 ($\O_{\C^{m+n},0}$ is the ring of germs of holomorphic functions on $\C^{m+n}$ at the origin).
 However this isomorphism is not canonical and an analogue of the intersection form on the local algebra
 is not well-defined.
 
 {\bf 2.} A very interesting problem is to understand whether analogues of the criterion of simplicity of a germ
 formulated in Theorem~\ref{theo:topology} holds in more general situations, say, for germs invariant with respect
 to actions of other finite groups. The main problem is the fact that up to now there is no conceptual proof
 of this criterion permitting to understand reasons why it holds. Its proofs in \cite{AGV1} and \cite{AGV2} (and
 here as well) consist of independent classifications of simple germs and of germs with negative definite
 intersection form (or its restriction) and comparison of these classifications (when it appears that they
 coincide). Because of that, for example for germs invariant with respect to actions of other finite groups,
 at this moment one cannot see an approach which permits to avoid a classification of simple germs which
 up to now does not exist. (For the cyclic group $\Z_3$ of order three it was obtained recently by the
 second author of this paper.)
\end{remarks}

\section{Simple corner singularities}\label{sec:corner}
Let us consider the action $S_{m,n}$ of the group $\Z_2^m$ on the space $\C^{m+n}$ given by the formula
\begin{equation}\label{eq:corner_action}
\sigma_i(x_1, \ldots, x_m; y_1, \ldots, y_n)=(x_1, \ldots, x_{i-1}, -x_i, x_{i+1} x_m; y_1, \ldots, y_n)\,,
\end{equation}
where $\sigma_1$, \dots, $\sigma_m$ are the generators of the group $\Z_2^m$, $(x_1, \ldots, x_m)=\bar{x}\in\C^m$,
$(y_1, \ldots, y_n)=\bar{y}\in\C^n$. The equivariant equivalence classes of germs of holomorphic functions on
$(\C^{m+n},0)$ invariant with respect to the representation $S_{m,n}$ are called {\em corner singularities}
(see \cite{Siersma}). We shall use the notions of the I-stable equivalence and of the stable equivalence in the
form they were defined in Section~\ref{sec:Z_2-simple}.

\begin{theorem}\label{theo:class_corner}
 Simple corner germs up to the equivariant I-stable equivalence and to renumbering of the generators of the group
 $\Z_2^m$ are given by the list 1)--8) from Theorem~\ref{theo:class}.
 Confining families for simple corner singularities (i.e.\ such that each non-simple germ is adjacent to a germ
 stable equivalent to one of them) are the germs given by the equations 1)--6) from Proposition~\ref{prop:bound} and
\begin{enumerate}
 \item[7)] $M_4$: $x_1^4+x_2^4+ax_1^2x_2^2$, $a^2\ne 4$.
\end{enumerate}
\end{theorem}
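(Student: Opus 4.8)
The plan is to imitate the proof of Theorem~\ref{theo:class}: split off the nondegenerate $x$-directions and then organize the argument according to how many of the variables $x_1,\ldots,x_m$ remain essential.

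I would begin by noting that an $S_{m,n}$-invariant germ $f$ is even in each $x_i$ separately, so no mixed monomial $x_ix_j$ ($i\neq j$) can occur and the Hessian of $f$ in the $x$-directions is diagonal, with $(i,i)$-entry $c_i=\partial^2 f/\partial x_i^2(0)$. For every $i$ with $c_i\neq0$ the equivariant splitting lemma applied to the $i$-th factor $\Z_2\subset\Z_2^m$ --- which follows, as in Section~\ref{sec:Z_2-simple}, from the equivariant Morse lemma with parameters and the invariant versal deformation of \cite{Slodowy}, the monomial $x_i^2$ being invariant under every $\sigma_j$ --- lets me remove $x_i$ by an $S_{m,n}$-equivariant coordinate change that splits off $x_i^2$. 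After doing this for all such $i$ (and, by an I-stabilization, for the nondegenerate $y$-directions), $f$ becomes equivariantly equivalent to a germ in which the only nontrivial information sits in an essential germ $\check f$ depending on the $x_i$ with $c_i=0$ and on the essential $y$-variables; exactly as in Theorem~\ref{theo:class}, $f$ is simple if and only if $\check f$ is, and the number $p$ of essential $x$-variables is a stable invariant.

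I would then split into cases according to $p$. If $p\le 1$, at most one $x$-variable survives, and after renumbering the generators I may assume it is $x_1$; since $x_2,\ldots,x_m$ no longer appear in $\check f$, the involutions $\sigma_2,\ldots,\sigma_m$ act trivially on its variables, so that $S_{m,n}$-equivariance of $\check f$ reduces to $\langle\sigma_1\rangle$-equivariance. Thus $\check f$ is precisely a boundary singularity, and Arnold's classification \cite[\S8]{Arnold-1978} yields exactly the list $A_k,D_k,E_6,E_7,E_8,B_k,C_k,F_4$; the freedom in the choice of the essential variable is what is meant by ``renumbering of the generators''. If $p\ge2$, then a germ stably equivalent to $\check f$ is adjacent to a germ stably equivalent to the invariant quartic $P_4(x_1,x_2)=x_1^4+x_2^4+ax_1^2x_2^2$, which carries the modulus $a$ and hence is not simple; so $p\ge 2$ forces non-simplicity. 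This is the corner counterpart of the germ $M_5$ of Theorem~\ref{theo:class}, but now the two sign changes act independently, so the relevant subspace of cycles is the one on which each $\sigma_i$ acts by $-1$, namely the tensor product of the two anti-invariant parts of the Milnor lattice of $x^4$; its dimension is $2\cdot 2=4$ rather than $5$, which is the reason for the name $M_4$.

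Finally, for the confining families I would combine the two cases: a non-simple corner germ with $p\le1$ has, as above, a non-simple boundary essential part, hence is adjacent to one of Arnold's six confining boundary families $P_8,X_9,J_{10},F_{1,0},K_{4,2},L_6$ (items 1)--6) of Proposition~\ref{prop:bound}, using \cite[Corollary~1]{Arnold-1978}), while a non-simple germ with $p\ge2$ is adjacent to a germ stably equivalent to $M_4$. The step I expect to require the most care is precisely this last identification: I must check that the finer corner equivalence creates no additional moduli in the $p\le1$ normal forms (so that the boundary list is neither enlarged nor diminished), and that in the $p\ge2$ case it is genuinely $M_4$ --- carrying the smaller, four-dimensional subspace of $(-1)^{\bullet}$-invariant cycles --- rather than $M_5$ that governs the corner problem. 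Everything else reduces to the equivariant splitting and to the boundary results already established.
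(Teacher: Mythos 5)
Your proposal is correct and follows essentially the same route as the paper, whose entire proof is the single remark that Theorem~\ref{theo:class_corner} is ``obtained by a small modification of the proof of Theorem~\ref{theo:class}'': you spell out exactly that modification (diagonal Hessian in the $x$-directions, equivariant splitting, reduction to the boundary case when at most one $x$-variable is essential, adjacency to the invariant quartic with its modulus when two or more are, and the resulting confining families with $M_5$ replaced by the $\Z_2^2$-invariant $M_4$). The concern you flag about the finer corner equivalence in fact dissolves by your own observation that the split-off generators act trivially on the essential variables, so equivariance there reduces to $\langle\sigma_1\rangle$-equivariance; and your $2\cdot 2=4$ count for $M_4$ agrees with the paper's explicit Dynkin-diagram computation in the proof of Theorem~\ref{theo:corner_topology}.
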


{\bf The proof} is obtained by a small modification of the proof of Theorem~\ref{theo:class}.

\begin{remark}
 The first part of Theorem~\ref{theo:class_corner} means that a corner singularity is simple if and only if it
 is stable equivalent to a simple boundary germ.
 The list of the confining families looks completely coinciding with the list from Proposition~\ref{prop:bound}
 However, the last of the singularities from this list ($M_4$) coincides with the last of the singularities from
 the list of Proposition~\ref{prop:bound} ($M_5$) only formally: as a ``usual'' (non-invariant) singularity. Here
 $M_4$ is considered as a $\Z_2^2$-invariant function, whence $M_5$ is considered as a $\Z_2$-invariant one.
 The sense of the index $4$ in the notation of the singularity will be explained later (see the proof of
 Theorem~\ref{theo:corner_topology}).
\end{remark}

We shall call an element of the homology group of the Milnor fibre of a corner singularity
{\em $(-1)^{\bullet}$-invariant}, if it is antiinvariant with respect to the action of each generator
$\sigma_i$, $i=1, \ldots, m$.
This means that it belongs to the invariant with respect to the action of the group $\Z_2^m$ subspace of the homology
group of the Milnor fibre corresponding to the (one-dimensional) representation $\det S_{m,n}$ on the $(m+n)$th
exterior power of the space $\C^{m+n}$. 

\begin{theorem}\label{theo:corner_topology}
 A germ of a corner singularity is simple if and only if the intersection form on the subspace of
 $(-1)^{\bullet}$-invariant cycles in the homology group of the Milnor fibre of its I-stabilization with the
 number of variables congruent to three modulo 4 is negative definite and if and only if the equivariant monodromy
 group on this subspace is finite.
\end{theorem}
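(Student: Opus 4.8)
The plan is to follow the proof of Theorem~\ref{theo:topology}, with the single involution replaced by the group $\Z_2^m$, and to isolate the one genuinely new verification. First I would show that the subspace of $(-1)^\bullet$-invariant cycles, together with the restricted intersection form, is an invariant of the stable equivalence class, so that both properties may be tested on a single representative. By the Sebastiani--Thom/Gabrielov description of the Milnor lattice of a sum (\cite{Gabrielov}, \cite[Part~I, Section~2, Theorem~14]{AGV2}), an I-stabilization $f\mapsto f+y^2$ tensors the lattice with a one-dimensional module on which every $\sigma_i$ acts trivially, whereas a II-stabilization $f\mapsto f+x_{m+1}^2$ tensors it with a one-dimensional module on which the new generator $\sigma_{m+1}$ acts by $-1$ and the old ones trivially. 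In both cases the locus $\{a:\sigma_{i*}(a)=-a\ \forall i\}$ is carried isomorphically to the corresponding locus of the stabilization, and---fixing the number of variables $\equiv 3\bmod 4$ as in the statement---the restricted form is preserved.

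Granting this, the direction ``simple $\Rightarrow$ property'' is immediate: by Theorem~\ref{theo:class_corner} a simple corner germ is stably equivalent to a simple boundary germ, so its $(-1)^\bullet$-invariant lattice is isomorphic to the anti-invariant lattice of that boundary germ, which Arnold identified with the negative definite root lattice $A_k,\dots,F_4$ carrying a finite Weyl monodromy group. For the converse it suffices, using the adjacency argument of \cite[page~268]{AGV1} and the list in Theorem~\ref{theo:class_corner}, to check that each confining family has neither property. Families 1)--3) carry the trivial group ($m=0$), so the subspace is the whole homology and one invokes Arnold's result for the parabolic germs $\widetilde E_6,\widetilde E_7,\widetilde E_8$; families 4)--6) have $m=1$ and are literally boundary singularities, already treated in \cite{Arnold-1978}. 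The sole new case is $M_4$, the germ $x_1^4+x_2^4+y_1^2$ viewed as $\Z_2^2$-invariant.

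For $M_4$ I would reuse the $3\times3$ diagram of Figure~\ref{fig:1} but now record the individual generators $\sigma_1,\sigma_2$, not just $\sigma_1\sigma_2$. The point absent from the $M_5$ computation is that, on a symmetric Morsification $x_1^4-\lambda x_1^2$, the involution $\sigma_1$ interchanges the two vanishing cycles of the split critical points $\pm\sqrt{\lambda/2}$ and negates the cycle over the fixed point $x_1=0$; likewise for $\sigma_2$. Hence each factor has a two-dimensional $(-1)$-eigenspace $V_i^-$, and the $(-1)^\bullet$-invariant subspace is $V_1^-\otimes V_2^-$, of dimension $4$ (the meaning of the index in $M_4$)---strictly inside the five-dimensional $\sigma_1\sigma_2$-invariant subspace of $M_5$. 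Computing the restricted form from the Sebastiani--Thom product of the two $A_3$ Seifert forms, I expect it to come out negative semidefinite with a two-dimensional kernel. Conceptually this is forced: $x_1^4+x_2^4$ is the parabolic singularity $X_9$, whose intersection form has a two-dimensional radical; this radical is $\Z_2^2$-invariant, and it should sit entirely inside the summand $V_1^-\otimes V_2^-$ (equivalently, the form should remain nondegenerate on the three other isotypic pieces), its two null vectors being the $\nabla,\nabla'$ already met inside the $M_5$ subspace. In particular the restricted form is not negative definite.

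The main obstacle is exactly this $M_4$ step, with two delicate points. First, one must get the signs in $\sigma_1,\sigma_2$ right: unlike their product, which is the clean central reflection of the diagram, each separately carries the $-1$'s from its fixed Morse point, and it is only these signs that yield a four- rather than one-dimensional antisymmetric subspace. Second, and this is the real risk, restricting to the smaller corner subspace could a priori restore definiteness, since the restriction of an indefinite form to a subspace may be definite; so the crux is to prove that the whole radical of the $X_9$ form is trapped in $V_1^-\otimes V_2^-$. Once degeneracy is in hand, the monodromy half follows as in Theorem~\ref{theo:topology}: the equivariant Picard--Lefschetz generators act on this subspace as reflections in the $(-1)^\bullet$-invariant vanishing cycles and fix the null vector $\nabla$, whence the shears $(h\cdots h)^s(v)=v+s\nabla$ show the equivariant monodromy group is infinite. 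With the stabilization invariance of the first paragraph, this shows that no confining family---hence no non-simple corner germ---has either property.
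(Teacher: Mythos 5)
Your architecture is exactly the paper's: reduce to one representative of the stable equivalence class via the Gabrielov/Sebastiani--Thom description, get the ``simple $\Rightarrow$ property'' direction from stable equivalence to simple boundary germs, reduce the converse to the confining families, dispatch families 1)--6) as (stably equivalent to) already-treated ordinary or boundary confining singularities, and finish with the shear argument for infiniteness of the monodromy. The paper does all of this, and its only genuinely new content is the $M_4$ verification. That is precisely where your proposal stops being a proof: you never establish that the restricted form on the four-dimensional subspace $V_1^-\otimes V_2^-$ is degenerate (equivalently, that the radical of the $X_9$ form is contained in that subspace); you write ``I expect it to come out negative semidefinite'' and ``it should sit entirely inside'', and you yourself flag this as ``the real risk''. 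Since, as you correctly observe, the restriction of a degenerate or indefinite form to a proper subspace can perfectly well be negative definite, nothing formal forces the conclusion, and the unproved claim is the entire content of Theorem~\ref{theo:corner_topology} beyond Theorem~\ref{theo:topology} and Theorem~\ref{theo:class_corner}. This is a genuine gap.

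The gap is real but closes in either of two ways. The paper closes it by direct computation on the Dynkin diagram of Figure~\ref{fig:1}: each $\sigma_i$ acts as the reflection of the diagram in the corresponding axis composed with multiplication by $-1$, the corner subspace acquires the basis $\delta_1=\Delta_1$, $\delta_2=\Delta_2+\Delta_4$, $\delta_3=\Delta_3+\Delta_5$, $\delta_4=\Delta_6+\Delta_7+\Delta_8+\Delta_9$, and one checks from the diagram that $\nabla=2\delta_1+\delta_2+\delta_3$ and $\nabla'=\delta_2+\delta_3+\delta_4$ pair to zero with all the $\delta_i$; these are the same two null vectors as in the $M_5$ computation, so the whole radical of $X_9$ is indeed trapped in the corner subspace. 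Alternatively, your tensor-product framework admits a clean conceptual closure, which you stopped just short of: on the factor $x_i^4$ the geometric monodromy is $x\mapsto ix$, so $\sigma_{i*}=h_{i*}^2$, and hence the $(\pm i)$-eigenvectors of $h_{i*}$ span $V_i^-\otimes\C$; since the radical of the intersection form of any isolated singularity is the fixed subspace of the classical monodromy (because $h_*-\mathrm{id}=\mathrm{Var}\circ j_*$ with $\mathrm{Var}$ an isomorphism), Sebastiani--Thom shows that the radical of $x_1^4+x_2^4+y_1^2$ is spanned by the products of an $i$-eigenvector with an $i$-eigenvector and of a $(-i)$-eigenvector with a $(-i)$-eigenvector, which lie in $V_1^-\otimes V_2^-$. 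Once degeneracy, with an explicit null vector $\nabla$, is in hand, your monodromy half does go through exactly as in the paper, which verifies $h_4h_1(\delta_2+\delta_3)=\delta_2+\delta_3+\nabla$ and hence $(h_4h_1)^s(\delta_2+\delta_3)=\delta_2+\delta_3+s\nabla$.
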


\begin{proof}
As in Theorem~\ref{theo:topology} it is sufficient to verify the statement for one representative of the
stable equivalence class of a germ. The fact that all corner singularities from the list of simple ones possess
the indicated property follows from the fact that they are stable equivalent to boundary ones. In order to prove
that other germs possessing this property do not exist, it is sufficient to verify that the confining corner
singularities do not possess this property. For the first six this follows from the fact that they are stable
equivalent to confining boundary singularities. Therefore it is sufficient to verify the statement for the germ
$x_1^4+x_2^4+ax_1^2x_2^2+y_1^2$ ($a^2\ne 4$) I-stable equivalent to the germ $M_4$. The Dynkin diagram of this germ
is given on Figure~\ref{fig:1}. The generators $\sigma_1$ and $\sigma_2$ of the group $\Z_2^2$ act on the basis
cycles by the reflections respectively with respect to the vertical and in the horizontal axis with the
multiplication by $-1$. Therefore the subspace of $(-1)^{\bullet}$-invariant cycles is generated by the cycles
$\delta_1=\Delta_1$,
$\delta_2=\Delta_2+\Delta_4$, $\delta_3=\Delta_3+\Delta_5$, $\delta_4=\Delta_6+\Delta_7+\Delta_8+\Delta_9$.
(The dimension of this subspace is equal to four, what is indicated in the notation of the singularity. The
restriction of the intersection form to this subspace has the kernel generated by
$\nabla=2\delta_1 +\delta_2+\delta_3$ and $\nabla'=\delta_2+\delta_3+\delta_4$. Just as in the case of
$\Z_2$-invariant germs (Theorem~\ref{theo:topology}), the equivariant monodromy group on the
subspace of $(-1)^{\bullet}$-invariant cycles is generated by the reflections $h_i$ in the listed basis cycles
$\delta_i$. One can see that $h_4h_1(\delta_2+\delta_3)=\delta_2+\delta_3+\nabla$. Therefore
$(h_4h_1)^s(\delta_2+\delta_3)=\delta_2+\delta_3+s\nabla$, what implies that the group generated by the reflections
$h_i$ is infinite.
\end{proof}

\bigskip
\rightline{Translated by S.M.Gusein-Zade.}
\end{document}